\definecolor{dark-red}{rgb}{0.6,0,0}
\definecolor{dark-green}{rgb}{0,0.4,0}
\definecolor{medium-blue}{rgb}{0,0,0.5}
\newcommand{\Rep}{\mathrm{Rep}}
\newcommand{\BB}{\mr{BB}}
\newcommand{\Fil}{\mr{Fil}}
\newcommand{\gr}{\mr{gr}}
\newcommand{\Gr}{\mathrm{Gr}}
\newcommand{\Fl}{\mr{Fl}}
\newcommand{\Spd}{\mathrm{Spd}}
\newcommand{\GL}{\mathrm{GL}}
\newcommand{\HT}{\mr{HT}}
\newcommand{\mc}[1]{\mathcal{#1}}
\newcommand{\mbb}[1]{\mathbb{#1}}
\newcommand{\mr}[1]{\mathrm{#1}}
\newcommand{\mf}[1]{\mathfrak{#1}}
\newcommand{\et}{\mathrm{\acute{e}t}}
\newcommand{\dR}{\mathrm{dR}}
\newcommand{\adm}{\mathrm{adm}}
\newcommand{\univ}{\mathrm{univ}}
\DeclareMathOperator{\Lie}{Lie}
\DeclareMathOperator{\Hom}{Hom}
\newcommand{\ul}[1]{\underline{#1}}
\newcommand{\Hdg}{\mr{Hdg}}
\numberwithin{equation}{subsection}
\numberwithin{equation}{subsubsection}
\theoremstyle{plain}
\newtheorem{maintheorem}{Theorem}
\newtheorem*{theorem*}{Theorem}
\newtheorem{lemma}[subsubsection]{Lemma}
\theoremstyle{definition}
\newtheorem{example}[subsubsection]{Example}
\newtheorem{remark}[subsubsection]{Remark}
\newcommand{\lf}{\mathrm{lf}}
\newcommand{\lfid}{{\diamond_{\lf}}}
\newcommand{\proet}{\mathrm{pro\acute{e}t}}
\title[The geom. Sen morphism is the unique lift of the Kod.-Spen. morphism]{The geometric Sen morphism is the unique lift of the Kodaira--Spencer morphism}
\author{Sean Howe}
\begin{document}

\begin{abstract} 
We show that the geometric Sen morphism of a de Rham torsor over a smooth rigid analytic variety over a $p$-adic field is the unique lift, along a natural map, of the Kodaira--Spencer morphism of the associated filtered torsor with integrable connection. This extends previous computations in the minuscule case, and implies that the geometric Sen morphism is the derivative of the lattice Hodge period map. The computation applies, in particular, to non-minuscule period domains generalizing local Shimura varieties, furnishing new examples of towers satisfying He's stalkwise perfectoidness. 
\end{abstract}

\maketitle

\tableofcontents

\section{Introduction}
This geometric Sen morphism of Pan \cite{Pan.OnLocallyAnalyticVectorsOfTheCompletedCohomologyOfModularCurves} and Rodriguez Camargo \cite{RodriguezCamargo.GeometricSenTheoryOverRigidAnalyticSpaces} is a canonical twisted Higgs field associated to a $p$-adic Lie group torsor over a rigid analytic variety. Since its introduction, it has played an important role in $p$-adic geometry and its applications to automorphic forms \cite{Pan.OnLocallyAnalyticVectorsOfTheCompletedCohomologyOfModularCurves, Pan.OnLocallyAnalyticVectorsOfTheCompletedCohomologyOfModularCurvesII, RodriguezCamargo.LocallyAnalyticCompletedCohomology, He.PerfectoidnessViaSenTheory, DospinescuRodriguezCamargo.JacquetLanglands}. For these applications, it is often necessary to have a description of the geometric Sen morphism in terms of more classical data. Such descriptions have been computed previously for infinite level global Shimura varieties \cite{RodriguezCamargo.LocallyAnalyticCompletedCohomology} (building on the case of the modular curve in \cite{Pan.OnLocallyAnalyticVectorsOfTheCompletedCohomologyOfModularCurves}), local Shimura varieties \cite{DospinescuRodriguezCamargo.JacquetLanglands}, and compact $p$-adic Lie group torsors over abelian varieties \cite{BellovinCaiHowe.CharacterizingPerfectoidCoversOfAbelianVarieties}. The main result of the present work (Theorem \ref{theorem.lift}) extends these prior computations by giving an explicit description of the geometric Sen morphism for any de Rham $p$-adic Lie group torsor in terms of the associated filtered analytic torsor with integrable connection. In particular, this result applies to the local systems coming from general smooth proper families of rigid analytic varieties where the Hodge cocharacter may no longer be minuscule (i.e., beyond the Shimura setting) and to non-miniscule rigid analytic crystalline period domains that generalize local Shimura varieties (Example \ref{ex.period-domains}). 

 Explicitly, Theorem \ref{theorem.lift} says that the geometric Sen morphism is the unique lift (along a natural map) of the Kodaira--Spencer morphism. This has a natural interpretation in terms of period maps (\S\ref{sss.period-maps}): the Kodaira--Spencer morphism is best interpreted as the derivative of the Hodge period map in the canonical flat direction, and in $p$-adic geometry Hodge period maps lift canonically to lattice Hodge period maps with target the Schubert split
$B^+_\dR$-affine Grassmannian. Our result then implies that the geometric Sen morphism is the derivative of this lattice lift (this is made precise using the inscribed $v$-sheaves of \cite{Howe.InscriptionTwistorsAndPAdicPeriods}). 

One consequence of Theorem \ref{theorem.lift} is that injectivity of geometric Sen can be checked at the level of Kodaira-Spencer. This has implications for perfectoidness; in particular, it furnishes many new examples where He's stalkwise perfectoidness criterion applies (\S\ref{sss.perfectoidness}).

\subsection{Statement of the main result}
Before discussing these examples and applications in more detail, we want to give a precise statement of our result. Thus, we first set up some notation about $p$-adic Lie torsors and filtered torsors with integrable connection. To that end, let $G/\mathbb{Q}_p$ be a connected linear algebraic group, let $K \subseteq G(\mathbb{Q}_p)$ be an open subgroup, and let $\mf{g}:=\Lie K = \Lie G(\mathbb{Q}_p)$. Let $L$ be a $p$-adic field (that is, a complete discretely valued extension of $\mathbb{Q}_p$ with perfect residue field), and let $S/L$ be a smooth rigid analytic variety. 

\subsubsection{The geometric Sen morphism}
  From $\tilde{S}/S$ a pro\'{e}tale $K$-torsor, \cite[Theorem 1.0.4] {RodriguezCamargo.GeometricSenTheoryOverRigidAnalyticSpaces} constructs a canonical twisted Higgs field
\[ \theta_{\tilde{S}} \in \left(\Omega^1_{S/C}\otimes_{\mathcal{O}} \left(\tilde{\mathfrak{g}}  \otimes_{\ul{\mathbb{Q}_p}} {\widehat{\mathcal{O}}}(-1)\right) \right)(S), \]
where $\tilde{\mathfrak{g}}=\tilde{S} \times^{K} \ul{\mathfrak{g}}$ is the form of the constant local system $\ul{\mathfrak{g}}$  obtained by twisting by $\tilde{S}$ along the adjoint action of $K$ on $\mathfrak{g}$. Dually, we view $\theta_{\tilde{S}}$ as a morphism on $S_\proet$,
\[ \kappa_{\tilde{S}}: T_{S/C} \otimes_{\mathcal{O}} \hat{\mathcal{O}} \rightarrow \tilde{\mathfrak{g}}\otimes_{\ul{\mbb{Q}_p}} \hat{\mathcal{O}}(-1). \]

\subsubsection{de Rham torsors and the Kodaira--Spencer morphism}
A pro\'{e}tale $K$-torsor $\tilde{S}/S$ is called de Rham if, for every $V \in \Rep_{\mathbb{Q}_p} G$, the associated $\mathbb{Q}_p$-local system $\tilde{K} \times^K V$ is de Rham in the sense of \cite{Scholze.pAdicHodgeTheoryForRigidAnalyticVarieties} (it is equivalent to check this for one faithful representation and, by \cite{LiuZhu.RigidityAndARiemannHilbertCorrespondenceForpAdicLocalSystems}, it even suffices to check that the pullback to a single classical point in each connected component is a de Rham Galois representation). Associated to any de Rham torsor $\tilde{S}$, we have a filtered $G$-torsor with integrable connection satisfying Griffiths transversality, $\tilde{S}_\dR/S$. We write $\tilde{\mf{g}}_\dR=\tilde{S}_\dR \times^G \Lie G$, which is the filtered vector bundle with integrable connection satisfying Griffiths transversality associated to the de Rham  $\mathbb{Q}_p$-local system $\tilde{\mathfrak{g}}$. It has a natural Hodge filtration, and the Kodaira--Spencer morphism for $\tilde{S}_\dR$ is a map
\[ \kappa_{\tilde{S}_\dR}: T_{S/C} \rightarrow  \gr^{-1}_\Hdg (\tilde{\mf{g}}_\dR) \]
(that it lands inside of $\gr^{-1}_\Hdg (\tilde{\mf{g}}_\dR)$ rather than just  $\tilde{\mf{g}}_\dR/\Fil^0 \tilde{\mf{g}}_\dR$ \emph{is} Griffiths transversality). 

\subsubsection{The comparison}
For $\tilde{S}/S$ a de Rham $K$-torsor, there is a Hodge-Tate filtration on $(\tilde{\mf{g}} \otimes_{\ul{\mbb{Q}_p}} \hat{\mathcal{O}})$ and the Hodge-Tate comparison gives a canonical isomorphism 
\[ \left(\gr^1_\HT (\tilde{\mf{g}} \otimes_{\ul{\mbb{Q}_p}} \hat{\mathcal{O}})\right)(-1)= \left(\gr^{-1}_\Hdg (\tilde{\mf{g}}_\dR)\right) \otimes_{\mathcal{O}} \hat{\mathcal{O}}. \]
On the other hand, we have the natural quotient map $\Fil^1_\HT(\tilde{\mf{g}} \otimes_{\ul{\mbb{Q}_p}} \hat{\mathcal{O}}) \twoheadrightarrow \gr^{1}_\HT(\tilde{\mf{g}} \otimes_{\ul{\mbb{Q}_p}} \hat{\mathcal{O}})$. Composing these two maps, we obtain a natural surjection
\begin{equation}\label{eq.nat-surj}q_{\tilde{S}}: \left(\Fil^1_\HT(\tilde{\mf{g}} \otimes_{\ul{\mbb{Q}_p}} \hat{\mathcal{O}})\right)(-1) \twoheadrightarrow \left(\gr^{-1}_\Hdg (\tilde{\mf{g}}_\dR)\right) \otimes_{\mathcal{O}} \hat{\mathcal{O}}.\end{equation}

Our main result compares $\kappa_{\tilde{S}}$ and $\kappa_{\tilde{S}_\dR}$ using this map: 
\begin{maintheorem}\label{theorem.lift}
    Let $L$ be a $p$-adic field, let $S/L$ be a smooth rigid analytic variety, let $G/\mathbb{Q}_p$ be a connected linear algebraic group, and let $K \leq G(\mathbb{Q}_p)$ be an open subgroup. For any de Rham $K$-torsor $\tilde{S}/S$, $\kappa_{\tilde{S}}$ factors through $\left(\Fil^1_\HT(\tilde{\mf{g}} \otimes_{\ul{\mbb{Q}_p}} \hat{\mathcal{O}})\right)(-1)$ and is the unique map 
    \[ T_{S/C} \otimes_{\mathcal{O}} \hat{\mathcal{O}} \rightarrow \left(\Fil^1_\HT(\tilde{\mf{g}} \otimes_{\ul{\mbb{Q}_p}} \hat{\mathcal{O}})\right)(-1) \]
    whose composition with $q_{\tilde{S}}$ is equal to $\kappa_{\tilde{S}_\dR}.$ 
\end{maintheorem}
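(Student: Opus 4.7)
The plan splits into three parts: (a) factorization of $\kappa_{\tilde{S}}$ through $\Fil^1_\HT(\tilde{\mf{g}}\otimes \hat{\mc{O}})(-1)$, (b) the lifting identity $q_{\tilde{S}}\circ \kappa_{\tilde{S}} = \kappa_{\tilde{S}_\dR}$, and (c) uniqueness of the lift. Parts (a) and (c) are rigidity statements that I plan to handle together via Tate/Sen weight vanishing. Under the Hodge--Tate comparison, the graded pieces of $\Fil^1_\HT(\tilde{\mf{g}}\otimes \hat{\mc{O}})(-1)$ away from $\gr^1$, as well as the graded pieces of $\ker q_{\tilde{S}}=\Fil^2_\HT(\tilde{\mf g}\otimes\hat{\mc O})(-1)$, become Tate twists of the form $\gr^{-i}_\Hdg(\tilde{\mf{g}}_\dR)\otimes_{\mc{O}} \hat{\mc{O}}(j)$ with $j\neq 0$. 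The basic rigidity underlying Sen theory --- that Sen-equivariant morphisms between $\hat{\mc O}$-modules of differing Sen weight vanish --- then forces $\kappa_{\tilde S}$ to have no component in $\gr^0_\HT(-1)$ (giving the factorization) and forces the difference of two lifts to vanish filtration-step by filtration-step (giving uniqueness).

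For (b), the lifting identity, I would exploit the naturality of both $\kappa_{\tilde{S}}$ and $\kappa_{\tilde{S}_\dR}$ in $(\tilde{S},S)$ to reduce to a universal computation. Every de Rham $K$-torsor is classified by its lattice Hodge period map to the Schubert-split $B_\dR^+$-affine Grassmannian (via the inscribed $v$-sheaf formalism of \cite{Howe.InscriptionTwistorsAndPAdicPeriods}), whose underlying classical Hodge period map targets the flag variety $\Fl_{G,\mu}$. Both morphisms pull back under such maps. By pulling back, the problem reduces to verifying the lifting identity for the universal de Rham torsor over the Schubert-split Grassmannian. On this universal object, the Kodaira--Spencer morphism is (classically) the derivative of the Hodge period map, which at $\mu$ is the tautological identification $T_\mu \Fl_{G,\mu}\cong \gr^{-1}_\Hdg \mf{g}$.

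I expect the main obstacle to be the identification, on this universal object, of Rodriguez Camargo's geometric Sen morphism --- constructed via the action of $\mf{g}$ on locally analytic vectors of $\hat{\mc{O}}$ --- with the concrete tangent map of the lattice Hodge period map. This requires unwinding the tangent-space description of the Schubert-split Grassmannian at a Hodge cocharacter $\mu$, where (after inscribed truncation) the tangent space matches $\Fil^1_\HT(\tilde{\mf{g}} \otimes \hat{\mc{O}})(-1)$, and showing that the inscribed $v$-sheaf-theoretic derivative of the period map yields the same pairing with $T_{S/C}\otimes \hat{\mc O}$ as Rodriguez Camargo's Lie-algebra construction. Granted this identification, composing with projection to $\gr^1_\HT(-1)$ recovers $q_{\tilde{S}}$ and matches the classical Kodaira--Spencer derivative, closing the argument.
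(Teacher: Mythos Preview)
Your handling of uniqueness (c) via the vanishing of $\nu_*\hat{\mc O}(j)$ for $j\neq 0$ matches the paper's argument. Your idea to obtain the factorization (a) by the same mechanism (now with negative twists) is reasonable over a $p$-adic base and would work, though the paper instead extracts (a) as a byproduct of the explicit $\GL_n$ computation used for (b).

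The genuine gap is in (b). You propose to pull back to a universal de Rham torsor over the Schubert-split $B^+_\dR$-Grassmannian and there identify Rodriguez Camargo's geometric Sen morphism with the derivative of the lattice Hodge period map. But this identification is exactly what the paper \emph{deduces from} Theorem~A (see \S1.2.3): once one knows $\kappa_{\tilde S}$ is the unique lift of $\kappa_{\tilde S_\dR}$ along $q_{\tilde S}$, and separately that $d\pi^+_\Hdg$ is also such a lift (since $d\BB\circ d\pi^+_\Hdg=d\pi_\Hdg=\kappa_{\tilde S_\dR}$), uniqueness forces $\kappa_{\tilde S}=d\pi^+_\Hdg$. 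Your plan reverses this logic without supplying an independent argument for the identification; you correctly flag it as the ``main obstacle'' but offer no mechanism. Note also that the Schubert-split Grassmannian is not a smooth rigid analytic variety, so Rodriguez Camargo's construction does not directly furnish a geometric Sen morphism on the universal object for you to compare against; and the functoriality of geometric Sen in \cite{RodriguezCamargo.GeometricSenTheoryOverRigidAnalyticSpaces} is for pushout of torsors, not for pullback along period maps to $v$-sheaves.

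The paper's route for (b) is entirely different and more direct: push out along a faithful representation to reduce to $G=\GL_n$, then for a de Rham local system $\mbb L$ with associated $(\mc V,\Fil^\bullet,\nabla)$, pass to $\gr^0$ of the filtered de Rham complex of $\mbb L\otimes\mc O\mbb B_\dR=\mc V\otimes\mc O\mbb B_\dR$. This realizes $\mbb L\otimes\hat{\mc O}$ as the kernel of $\theta_{\mc V}\otimes 1+1\otimes\overline\nabla$ acting on $\bigoplus_j\gr^{-j}_\Hdg\mc V\otimes\mc O\mbb C(j)$. Rodriguez Camargo's computation that the geometric Sen operator on $\mc O\mbb C$ is $-\overline\nabla$ then forces the Sen operator on this kernel to coincide with $\theta_{\mc V}\otimes 1$, which simultaneously gives the factorization through $\Fil^1_\HT(-1)$ and the identification of its image in $\gr^1_\HT(-1)$ with Kodaira--Spencer. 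This is the compatibility with the Liu--Zhu $p$-adic Simpson correspondence mentioned in the introduction, and your proposal does not engage with it.
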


\begin{remark}\label{remark.minuscule}
    In particular, if $\Fil^2_\HT(\tilde{\mf{g}} \otimes_{\ul{\mbb{Q}_p}} \hat{\mathcal{O}})=0$, then 
    \[ \left(\Fil^1_\HT(\tilde{\mf{g}} \otimes_{\ul{\mbb{Q}_p}} \hat{\mathcal{O}})\right)(-1) = \left(\gr^1_\HT(\tilde{\mf{g}} \otimes_{\ul{\mbb{Q}_p}} \hat{\mathcal{O}})\right)(-1) \]
    and in this case Theorem \ref{theorem.lift} says $\kappa_{\tilde{S}}=\kappa_{\tilde{S}_\dR}.$ If $G$ is reductive and there is a Hodge cocharacter (e.g. if $S$ is geometrically connected), then this is precisely the case in which that cocharacter is minuscule --- this is the situation one is in when considering local and global Shimura varieties. In general, the uniqueness statement means that even when $\Fil^2_\HT(\tilde{\mf{g}} \otimes_{\ul{\mbb{Q}_p}} \hat{\mathcal{O}}) \neq 0$, the Kodaira-Spencer morphism uniquely determines the geometric Sen morphism. 
\end{remark}

\subsubsection{Sketch of proof}
That such a lift is unique if it exists is a straightforward consideration of Hodge-Tate weights and the vanishing of $\nu_*\left( \hat{\mathcal{O}}(k)\right)$ for $k<0$ established in \cite{Scholze.pAdicHodgeTheoryForRigidAnalyticVarieties}, where $\nu$ is the natural map $S_\proet \rightarrow S_\et$. To show that geometric Sen does indeed lift Kodaira--Spencer, since both are functorial it suffices to treat the case of $G=\GL_n$. In this case it can be extracted directly from a comparison with the $p$-adic Simpson correspondence of \cite{LiuZhu.RigidityAndARiemannHilbertCorrespondenceForpAdicLocalSystems}. This same compatibility is used already in the proofs for local and global Shimura varieties, thus, the main innovation in Theorem \ref{theorem.lift} was perhaps just to find a natural statement. 

\subsection{Examples and applications}

For canonical torsors over Shimura varieties, Theorem~\ref{theorem.lift} was established in \cite{RodriguezCamargo.LocallyAnalyticCompletedCohomology} (building on the case of the modular curve treated in   \cite{Pan.OnLocallyAnalyticVectorsOfTheCompletedCohomologyOfModularCurves}). The case of canonical torsors over local Shimura varieties was established in \cite{DospinescuRodriguezCamargo.JacquetLanglands}. Both are in the minuscule setting as in Remark \ref{remark.minuscule}; below we give one more example (Example \ref{example.av}) where we recover a prior computation in the minuscule setting, then discuss some new examples in the non-minuscule setting. In particular, Example \ref{ex.period-domains} generalizes the case of local Shimura varieties to non-miniscule flat crystalline period domains. In \S\ref{sss.perfectoidness} we explain how these examples relate to conjectures and results on perfectoidness of torsors. Finally, in \S\ref{sss.period-maps} we explain the connection with derivatives of period maps. 

\begin{example}[Abelian varieties]\label{example.av}
Let $L$ be a $p$-adic field, let $C=\overline{L}^\wedge$, and let $A/L$ be an abelian variety. The tower $(A)_{n \in \mathbb{N}}$ with transition maps multiplication by $p$ is a torsor over $A$ for the $p$-adic Tate module\footnote{This is not strictly within the purview of Theorem \ref{theorem.lift} since $T_p A$ over $L$ is not a $p$-adic Lie group but rather an arithmetically twisted $p$-adic Lie group. Nonetheless, there is still a geometric Sen morphism because, for $g=\dim A$, $(T_p A)_{C}\cong \ul{\mathbb{Z}_p}^{2g}$ is a $p$-adic Lie group, and the geometric Sen morphism can be constructed purely over $C$. To make the following argument precise one can instead work with a $\GL_{2g}(\mathbb{Z}_p) \ltimes \mathbb{Z}_p^{2g}$-torsor by taking also trivializations of $T_p A$ (so that the $\GL_{2g}(\mathbb{Z}_p)$ part of the torsor is purely arithmetic).} $T_p A$. The associated torsor on the de Rham side is the $H_{1,\dR}(A)$-torsor of splittings of the relative de Rham homology sequence over $A$
\[ 0 \rightarrow H_{1,\dR}(A) \otimes_{L} \mathcal{O}_A \rightarrow \mathcal{H}_{1,\dR}(A, \{a, 0\}) \rightarrow  \mathcal{O}_A \rightarrow 0 \]
associated to the map of varieties over $A$, 
\[ \{\ast_1, \ast_2\} \times A \rightarrow A \times A / A,\; (\ast_1, a) \mapsto (0,a) \textrm{ and } (\ast_2, a)\mapsto (a, a). \]
By a complex analytic computation (passing to a finitely generated extension of $\mathbb{Q}$ over which $A$ is defined and then base changing to $\mathbb{C}$), one finds that, above a point $a \in A$, the Kodaira-Spencer map on $\mathcal{H}_{1,\dR}(A, \{a,0\}))$ sends $t \in T_{a} A=\Lie A$ to the element of $\gr^{-1} \mathrm{End}( \mathcal{H}_{1,\dR}(A, \{a,0\}))$ represented by the composition of projection from $\mathcal{H}_{1,\dR}(A, \{a,0\})$ onto $\mathcal{O}_A$ with the map from $\mathcal{O}_A$ sending $1$ to 
\[ t \in \Lie A=H_{1,\dR}(A)/\Fil^0 H_{1,\dR}(A) \subseteq H_{1,\dR}(A, \{a,0\})/\Fil^0 H_{1,\dR}(A, \{a,0\}). \]
Applying Theorem \ref{theorem.lift}, one finds that the geometric Sen morphism is the map induced by the inclusion of the Hodge-Tate filtration $\Lie A \subseteq T_p A \otimes C(-1)$, recovering in this case \cite[Theorem 1.10] {BellovinCaiHowe.CharacterizingPerfectoidCoversOfAbelianVarieties} (but the argument of \cite{{BellovinCaiHowe.CharacterizingPerfectoidCoversOfAbelianVarieties}} also gives the result also for abelian varieties over $C$ that are not defined over a $p$-adic subfield!). Note that \cite[Theorem 1.10]{BellovinCaiHowe.CharacterizingPerfectoidCoversOfAbelianVarieties} also computes the geometric Sen morphism for the analogous cover of a $p$-divisible rigid analytic group; Theorem \ref{theorem.lift} can also be used to recover this computation for a $p$-divisible rigid analytic group $G$ defined over $L$ --- here, the relevant comparison is between the Tate and Dieudonn\'{e} modules of the universal extension of  $\mathbb{Q}_p/\mathbb{Z}_p$ by $G$ over $G$ constructed as in \cite[\S4]{HoweMorrowEtAl.TheConjugateUniformizationVia1Motives}. 
\end{example}

\begin{example}
A direct summand of the relative $p$-adic cohomology local system of a smooth proper family is a de Rham local system. If the Hodge numbers for a member of this family are not concentrated in two neighboring indices then the Hodge cocharacter is non-minuscule and the computation of the geometric Sen morphism in Theorem \ref{theorem.lift} is new (though in some cases can be recovered from previous computations for global and local Shimura varieties if the Hodge cocharacter is minuscule as a cocharacter of the Mumford-Tate group or local Mumford-Tate group). Explicit examples can be found, e.g., by taking the middle cohomology of a family of hypersurfaces of large degree in $\mathbb{P}^n$, $n \geq 3$.  
\end{example}

\begin{example}
Typically, the Hodge-Tate cocharacter of a de Rham $p$-adic Galois representation will not be minuscule even as a cocharacter of the Zariski closure of its image. In particular, when the closure is reductive, any de Rham torsor whose specialization at a classical point recovers such a representation will have non-minuscule Hodge cocharacter and thus cannot be constructed by pullback from a canonical torsor on a local or global Shimura variety. Thus the computation of Theorem \ref{theorem.lift} is completely new in these cases.  
\end{example}

\begin{example}\label{ex.period-domains}  For $G/\mathbb{Q}_p$ a connected linear algebraic group, we write $\Fl_G$ for the flag variety parameterizing filtrations on the trivial $G$-torsor. There is a natural Bialynicki-Birula map $\Gr_G^{\mr{split}} \rightarrow \Fl_G^\diamond$, where the domain $\Gr_G^{\mr{split}}$ is the disjoint union of the Schubert cells in the $B^+_\dR$-affine Grassmannian $\Gr_G$ parameterizing lattices on the trivial $G(\mathbb{B}^+_\dR)$-torsor.

Let $L$ be a $p$-adic field with residue field $\kappa$ and let $L_0=W(\kappa)[1/p]$. We base change all of the spaces in question to $L$. Then, by \cite[Theorem 5.0.4-(4)]{HoweKlevdal.AdmissiblePairsAndpAdicHodgeStructuresIITheBiAnalyticAxLindemannTheorem}, for $S/L$ a smooth rigid analytic variety, $\Hom_{\Spd L}(S^\diamond, \Gr_{G, L}^{\mr{split}})$ is canonically identified with the set of maps of rigid analytic varieties $f: S \rightarrow \Fl_G$ satisfying Griffiths transversality (i.e. such that $df$ factors through $f^*\gr^{-1}_{\univ} (\mf{g} \otimes \mathcal{O}_{\Fl_G}) \subseteq f^* T_{\Fl_G}=f^*(\mf{g} \otimes \mathcal{O}_{\Fl_G} / \Fil^{0}_{\univ}(\mf{g} \otimes \mathcal{O}_{\Fl_G} ) )$).

Now, for $b \in G(L_0)$, we have an open $b$-admissible locus $\Gr_G^{\mr{split}, b-\adm} \subseteq \Gr_G^{\mr{split}}$, with a canonical $G(\mathbb{Q}_p)$-torsor $\mathbb{U}$ over it (this is the universal flat crystalline $G(\mathbb{Q}_p)$-torsor for the $G$-isocrystal $b$). In particular, for any map $S \rightarrow \Fl_G$ atisfying Griffiths transversality, we have the associated lift $\tilde{f}: S \rightarrow \Gr_{G,L}^{\mr{split}}$ and an open subset $S^{b-\adm}:=\tilde{f}^{-1}(\Gr_G^{\mr{split}, b-\adm})$. Over $S^{b-\adm}$ we have the de Rham $G(\mathbb{Q}_p)$-torsor $f^*\mathbb{U}$, and by construction the associated Kodaira-Spencer morphism is identified with $df$; Theorem \ref{theorem.lift} then pins down its geometric Sen morphism. In particular, if we restrict to a component $\Fl_{[\mu]}$ of $\Fl_G$ corresponding to a geometric conjugacy class of miniscule cocharacters $[\mu]$, then we can take $S=\Fl_{[\mu]}$, and in this case we recover the computation of the geometric Sen morphism in the case of local Shimura varieties. However, just as in complex Hodge theory, there are also many smooth rigid analytic subvarieties living inside of non-minuscule components but still satisfying Griffiths transversality, and these provide  interesting non-miniscule rigid analytic period domains where our computation of the geometric Sen morphism applies and is new. 
\end{example}

\subsubsection{Perfectoidness}\label{sss.perfectoidness}
For $S/C$ a smooth rigid analytic variety,  $K$ a $p$-adic Lie group, and $\tilde{S}/S$ a pro-\'{e}tale $K$-torsor, \cite[Conjecture 3.3.5]{RodriguezCamargo.GeometricSenTheoryOverRigidAnalyticSpaces} predicts that $\tilde{S}_C$ is perfectoid if the geometric Sen morphism $\kappa_{\tilde{S}}$ is injective at every geometric point. This conjecture is known if $S$ is a semi-abelian variety and $K$ is compact by \cite[Theorem 1.10]{BellovinCaiHowe.CharacterizingPerfectoidCoversOfAbelianVarieties}, and there is also a large pile of evidence coming from the cases where infinite level local or global Shimura varieties are known to be perfectoid (starting with the Hodge-type case established in \cite{Scholze.OnTorsionInTheCohomologyOfLocallySymmetricVarieties}). 

When $S$ and $\tilde{S}/S$ both arise via base change from the $p$-adic field $L$, Tongmu He \cite[Theorem 1.17]{He.PerfectoidnessViaSenTheory} has established a stalkwise perfectoidness under the same injectivity hypothesies that is sufficient for many applications to cohomological vanishing. Theorem \ref{theorem.lift} implies that if the torsor is  de Rham and the Kodaira-Spencer morphism is injective at every geometric point then the geometric Sen morphism is also injective at every geometric point, thus \cite[Theorem 1.17]{He.PerfectoidnessViaSenTheory} applies in such a case (and we also expect global perfectoidness). In particular, this holds in the setting of Example \ref{ex.period-domains} precisely when the map $f$ to the flag variety is an immersion, and this yields many new examples where \cite[Theorem 1.17]{He.PerfectoidnessViaSenTheory} applies. 

\subsubsection{Derivatives of period maps}\label{sss.period-maps}
Inscribed $v$-sheaves, as introduced in \cite{Howe.InscriptionTwistorsAndPAdicPeriods}, provide a differential structure on top of the usual theory of diamonds and $v$-sheaves. 
The construction of \cite[\S7]{Howe.InscriptionTwistorsAndPAdicPeriods} associates to the filtered $G$-bundle with integrable connection $\tilde{S}_\dR$ an inscribed Hodge period map and inscribed lattice Hodge period map fitting in a commutative diagram
\[\begin{tikzcd}
	{S^{\lfid} } && {\Gr^{\mathrm{split}}_G/G(\overline{\mathbb{B}^+_\dR})} \\
	\\
	&& {\Fl_{G}^\lfid/G(\overline{\mathcal{O}})}
	\arrow["{\pi^+_\Hdg}"{description}, from=1-1, to=1-3]
	\arrow["{\pi_{\Hdg}}"{description}, from=1-1, to=3-3]
	\arrow["\BB"{description}, from=1-3, to=3-3]
\end{tikzcd}\]
Here $\Gr^\mr{split}_G$ is the disjoint union of Schubert cells in the (inscribed) $\mathbb{B}^+_\dR$-affine Grassmannian for $G$ and $\BB$ is the Bialynicki-Birula map. By \cite[Lemma 7.1.6]{Howe.InscriptionTwistorsAndPAdicPeriods}, the derivative of $\pi_{\Hdg}$ is (the inscibred extension of) the Kodaira-Spencer morphism $\kappa_{\tilde{S}_\dR}$, and the derivative of $\BB$ is naturally identified with (the inscribed extension of) the map $q_{\tilde{S}}$ as above. Because $d\pi_{\Hdg}= d\BB \circ d\pi_{\Hdg}^+$, we deduce from Theorem \ref{theorem.lift} that $d\pi_{\Hdg}^+$ is the geometric Sen morphism (after restriction to the underlying $v$-sheaf; in particular, $d\pi_{\Hdg}^+$ provides a natural inscribed extension of the geometric Sen morphism which is not, a priori, pinned down by the original geometric Sen morphism because the latter is defined using the completed structure sheaf). 

\subsection{Outline}
In \S\ref{s.filtered-torsors} we recall some constructions with filtered bundles with integrable connection and construct the Kodaira-Spencer morphism. In \S\ref{s.proof} we prove Theorem \ref{theorem.lift}. 

\subsection{Acknowledgements} We thank Tongmu He, Lue Pan, and Juan Esteban Rodriguez Camargo for helpful conversations. This work was supported in part by National Science Foundation grants DMS-2201112 and DMS-2501816.

\section{Filtered bundles with integrable connection}\label{s.filtered-torsors}
Let $L$ be any complete extension of $\mathbb{Q}_p$, and let $S/L$ be a smooth rigid analytic variety. In this section we explain our terminology for filtered vector bundles and torsors with integrable connection, then construct the Kodaira-Spencer morphism. 

\subsection{Filtered vector bundles with integrable connection}
 We consider the exact category of filtered vector bundles with integrable connection on $S$, whose objects are vector bundles $\mathcal{V}$ equipped with a locally separated and exhaustive descending filtration $\Fil^\bullet \mc{V}$ by local direct summands and equipped with a connection $\nabla_{\mathcal{V}}: \mathcal{V} \rightarrow \mathcal{V} \otimes_{\mathcal{O}} \Omega^1_S$ such that $\nabla^2=0$. We say $(\mathcal{V}, \Fil^\bullet, \nabla)$ satisfies Griffiths transversality if $\nabla|_{\Fil^i\mathcal{V}}$ factors through $\Fil^{i-1} \mathcal{V} \otimes_{\mathcal{O}} \Omega^1_S.$ 

\subsection{Filtered torsors with integrable connection}\label{ss.filtered-torsors-with-integrable-connection}
Suppose $G/L$ is a connected linear algebraic group. A filtered $G$-bundle with integrable connection on $S$ is an exact tensor functor $\omega$ from $\Rep_L G$ to filtered vector bundles with integrable connection on $S$. We say it satisfies Griffiths transversality if it factors through the subcategory of filtered vector bundles with integrable connection satisfying Griffiths transversality. 

Given a filtered $G$-bundle with integrable connection $\omega$, we obtain a geometric $G$-torsor $\rho:\mathcal{G}\rightarrow S$ representing the functor of trivializations of the underlying $G$-bundle. In particular, over $\mathcal{G}$ the filtration on the $G$-bundle is transported $G$-equivariantly to a filtration on the trivial bundle that is classified by a $G$-equivariant map $\pi: \mathcal{G} \rightarrow \Fl_G$, where $\Fl_G$ is the flag variety parameterizing trivializations on the trivial $G$-torsor (its geometric connected components are in bijection with conjugacy classes $[\mu]$ of geometric cocharacters of $G$, which are the flag varieties $\Fl_{[\mu]}$ most often considered in the literature). The connection gives rise to a $G$-equivariant splitting $T_{\mathcal{G}/L}= \Lie G \oplus \rho^* T_{S/L}$, where here $\Lie G$ is treated as a vector group and corresponds to the vertical tangent directions via the $G$-action. We refer to the data of $\mathcal{G}$, $\pi$, and the splitting as the associated filtered $G$-torsor with integrable connection; note that one can recover $\omega$ from this data (but to obtain an equivalence between the notions we should further require that the splitting also respect Lie brackets of vector fields). 

\subsection{The Kodaira-Spencer morphism}\label{ss.Kodaira-Spencer}
Continuing with the notation of \S\ref{ss.filtered-torsors-with-integrable-connection}, we can restrict $d\pi$ to $\rho^* T_{S/L} \subseteq T_{\mathcal{G}/L}$ and then descend along the $G$-action back to $S$ to obtain 
\[ \kappa_{\mathcal{G}}: T_{S/L} \rightarrow \omega(\mf{g}) / \Fil^0 \omega(\mf{g}). \]
This is evidently functorial in push-out. When $G=\GL_n$, writing $(\mathcal{V},\Fil^\bullet, \nabla):=\omega(L^n)$,  
\[ \omega(\mf{g})=\mathcal{E}nd(\mathcal{V}) \textrm{ as a vector bundle.} \]
The element $\kappa_{\mathcal{G}}$ can be computed in this case by choosing (locally) another connection $\nabla'$ on $\mathcal{V}$ that does preserve the filtration and then taking $\nabla-\nabla'$ as an element of 
\[\Omega^1_{S} \otimes_{\mathcal{O}_S} \mathcal{E}nd(V)\]
and then passing to
\[\Omega^1_{S} \otimes_{\mathcal{O}_S} (\mathcal{E}nd(V)/\Fil^0\mathcal{E}nd V)\]
to get an element that is independent of the choice of $\nabla'$ (so that, in particular, the local choices all glue to a global section). It follows that $(\mathcal{V}, \Fil^\bullet, \nabla)$ satisfies Griffiths transversality if and only if its Kodaira-Spencer morphism lies in $\gr^{-1}\left(\mathcal{E}nd V\right)$. By the functoriality of the construction, we conclude that in general a filtered $G$-bundle with integrable connection $\omega$ satisfies Griffiths transversality if and only if $\kappa_{\mathcal{G}}$ factors through $\gr^{-1}(\omega(\mf{g}))$. 

\subsection{de Rham local systems}\label{ss.deRham}
Suppose now that $L$ is a $p$-adic field. The results of \cite[\S6]{Scholze.pAdicHodgeTheoryForRigidAnalyticVarieties} show that 
\[ \mathbb{M}: (\mathcal{V}, \Fil^\bullet, \nabla) \rightarrow \Fil^0(\mathcal{V} \otimes_{\mathcal{O}}\mathcal{O}\mathbb{B}_\dR)^{\nabla=0} \]
is a fully faithful exact tensor functor from filtered vector bundles with integrable connection on $S$ satisfying Griffiths transversality to $\mathbb{B}^+_\dR$-local systems on $S_\proet$. Recall that a $\ul{\mathbb{Q}_p}$-local system $\mathbb{L}$ is de Rham if $\mathbb{L}\otimes_{\ul{\mathbb{Q}_p}}\mathbb{B}^+_\dR = \mathbb{M}(\mathcal{V},\Fil^\bullet, \nabla)$, in which case $\mathbb{L} \otimes_{\ul{\mathbb{Q}_p}} \mathcal{O}\mathbb{B}_\dR = \mathcal{V} \otimes_{\mathcal{O}} \mathcal{O}\mathbb{B}_\dR$, compatibly with the natural filtrations and connections on both sides.

\section{Proof of Theorem \ref{theorem.lift}}\label{s.proof}
In this section we prove Theorem \ref{theorem.lift}. Thus let $L$ be a $p$-adic field, let $S/L$ be a smooth rigid analytic variety, let $G/\mathbb{Q}_p$ be a connected linear algebraic group, and let $K \leq G(\mathbb{Q}_p)$ be an open subgroup. We fix a de Rham $K$-torsor $\tilde{S}/S$, and write $\tilde{S}_\dR$ for the associated filtered $G$-torsor with integrable connection. We write $\tilde{\mf{g}}=\tilde{S} \times^K \ul{\mf{g}}$, and $\tilde{\mf{g}}_\dR=\tilde{S}_\dR \times^G \Lie G$, which is the filtered vector bundle (with integrable connection that we will not use) associated to the de Rham local system $\tilde{\mf{g}}$ as in \S\ref{ss.deRham}. On $\tilde{\mf{g}} \otimes \hat{\mc{O}}$ we have the Hodge-Tate filtration, written $\Fil^\bullet_\HT$, and its associated graded $\gr^{\bullet}_\HT$. On $\mf{g}_\dR$ we have the Hodge filtration, written $\Fil^\bullet_\Hdg$, and its associated graded $\gr^{\bullet}_\Hdg$. The geometric Sen morphism associated to $\tilde{S}$ by \cite[Theorem 1.0.4]{RodriguezCamargo.GeometricSenTheoryOverRigidAnalyticSpaces} is written, as in the introduction, as
\[ \kappa_{\tilde{S}}: T_{S/C} \otimes_{\mathcal{O}} \hat{\mathcal{O}} \rightarrow \tilde{\mathfrak{g}}\otimes_{\ul{\mbb{Q}_p}} \hat{\mathcal{O}}(-1), \]
and the Kodaira-Spencer morphism associated to $\tilde{S}_\dR$ (see \S\ref{ss.filtered-torsors-with-integrable-connection}) is written
\[ \kappa_{\tilde{S}_\dR}: T_{S/C} \rightarrow  \gr^{-1}_\Hdg (\tilde{\mf{g}}_\dR). \]

We have a natural map
\begin{equation}\label{eq.nat-surj-body}q_{\tilde{S}}: \left(\Fil^1_\HT(\tilde{\mf{g}} \otimes_{\ul{\mbb{Q}_p}} \hat{\mathcal{O}})\right)(-1) \twoheadrightarrow \left(\gr^1_\HT(\tilde{\mf{g}} \otimes_{\ul{\mbb{Q}_p}} \hat{\mathcal{O}})\right)(-1) = \left(\gr^{-1}_\Hdg (\tilde{\mf{g}}_\dR)\right) \otimes_{\mathcal{O}} \hat{\mathcal{O}}, \end{equation}
where the equality is by the Hodge-Tate comparison. We need to show that $\kappa_{\tilde{S}}$ factors through $\left(\Fil^1_\HT(\tilde{\mf{g}} \otimes_{\ul{\mbb{Q}_p}} \hat{\mathcal{O}})\right)(-1)$, that $\kappa_{\tilde{S}_\dR}=q_{\tilde{S}} \circ \kappa_{\tilde{S}}$, and that $\kappa_{\tilde{S}}$ is the unique such lift.

\subsection{Uniqueness of the lift}
Consider the short exact sequence on $S_\proet$
\[ 0 \rightarrow \Fil^2_\HT(\tilde{\mf{g}} \otimes_{\ul{\mbb{Q}_p}} \hat{\mathcal{O}}) \rightarrow \Fil^1_\HT(\tilde{\mf{g}} \otimes_{\ul{\mbb{Q}_p}} \hat{\mathcal{O}}) \rightarrow \gr^{1}_\HT(\tilde{\mf{g}} \otimes_{\ul{\mbb{Q}_p}} \hat{\mathcal{O}}) \rightarrow 0 \]
Since $\Omega_{S/C} \otimes_{\mathcal{O}} \hat{\mathcal{O}}(-1)$ is a locally free $\hat{\mathcal{O}}$-module, the sequence remains exact when we tensor with it, so we obtain a short exact sequence
\[ 0 \rightarrow \Omega_{S/C} \otimes_{\mathcal{O}}  \Fil^2_\HT(\tilde{\mf{g}} \otimes_{\ul{\mbb{Q}_p}}\hat{\mathcal{O}})(-1) \rightarrow \Omega_{S/C} \otimes_{\mathcal{O}}  \Fil^1_\HT(\tilde{\mf{g}} \otimes_{\ul{\mbb{Q}_p}} \hat{\mathcal{O}}) \rightarrow \Omega_{S/C} \otimes_{\mathcal{O}}  \gr^{1}_\HT(\tilde{\mf{g}} \otimes_{\ul{\mbb{Q}_p}} \hat{\mathcal{O}}) \rightarrow 0. \]
The Kodaira-Spencer map is a section over $S$ of the last non-zero term, while the geometric Sen morphism is a section over $S$ of the middle non-zero term, and the map between them is precisely that which should be considered for the lift in Theorem \ref{theorem.lift}. To show uniqueness of a lift, it thus suffices to show that, for $\nu: S_\proet \rightarrow S_\et$ the natural morphism, that 
\[ \nu_* \left(\Omega_{S/C} \otimes_{\mathcal{O}}  \Fil^2_\HT(\tilde{\mf{g}} \otimes_{\ul{\mbb{Q}_p}}\hat{\mathcal{O}})(-1)\right)=0.\]

By the projection formula, it suffices to prove $\nu_* \left(\Fil^2_\HT(\tilde{\mf{g}} \otimes_{\ul{\mbb{Q}_p}}\hat{\mathcal{O}})(-1)\right)=0$. To that end, note that for $k \geq 2$ we have the exact sequence  
\[ 0 \rightarrow \left(\Fil^{k+1}_\HT(\tilde{\mf{g}} \otimes_{\ul{\mbb{Q}_p}} \hat{\mathcal{O}})\right)(-1) \rightarrow \left(\Fil^{k}_\HT(\tilde{\mf{g}} \otimes_{\ul{\mbb{Q}_p}} \hat{\mathcal{O}})\right)(-1) \rightarrow \left(\gr^{k}_\HT(\tilde{\mf{g}} \otimes_{\ul{\mbb{Q}_p}} \hat{\mathcal{O}})\right)(-1) \rightarrow 0. \]
It thus suffices to show that, for all $k \geq 2$, 
\begin{equation}\label{eq.uniqueness-gr-vanishing}\nu_*\left(\left(\gr^{k}_\HT(\tilde{\mf{g}} \otimes_{\ul{\mbb{Q}_p}} \hat{\mathcal{O}})\right)(-1)\right)=0.\end{equation}
Indeed, we will then have $\nu_*\left(\left(\Fil^{k}_\HT(\tilde{\mf{g}} \otimes_{\ul{\mbb{Q}_p}} \hat{\mathcal{O}})\right)(-1)\right) = \nu_*\left(\left(\Fil^{N}_\HT(\tilde{\mf{g}} \otimes_{\ul{\mbb{Q}_p}} \hat{\mathcal{O}})\right)(-1)\right)$ for any $N \geq 2$, but since the filtration is exhaustive the latter can be chosen large enough to be zero. Now, by the Hodge-Tate comparison, 
\[ \gr^{k}_\HT(\tilde{\mf{g}} \otimes_{\ul{\mbb{Q}_p}} \hat{\mathcal{O}}) = \gr^{-k}_\Hdg(\tilde{\mf{g}}_\dR)\otimes_{\mathcal{O}} \hat{\mathcal{O}}(k)\]
so, by the projection formula, to establish \eqref{eq.uniqueness-gr-vanishing} it suffices to observe that, since $k-1 \geq 1$, \cite[Corollary 6.19]{Scholze.pAdicHodgeTheoryForRigidAnalyticVarieties} gives $\nu_*\left( \hat{\mathcal{O}}(k-1)\right)=0$. This establishes the uniqueness in Theorem \ref{theorem.lift}.

\subsection{Computation of the geometric Sen morphism}

Because both the geometric Sen morphism and Kodaira-Spencer morphism are functorial in push-out (the former by the final part of \cite[Theorem 1.0.4]{RodriguezCamargo.GeometricSenTheoryOverRigidAnalyticSpaces} and the latter by the discussion of \S\ref{ss.Kodaira-Spencer}), to show that $\kappa_{\tilde{S}}$ factors through $\left(\Fil^1_\HT(\tilde{\mf{g}} \otimes_{\ul{\mbb{Q}_p}} \hat{\mathcal{O}})\right)(-1)$ and that $\kappa_{\tilde{S}_\dR}=q_{\tilde{S}} \circ \kappa_{\tilde{S}}$, we may push-out along a faithful representation of $G$ to assume that $G=\GL_n$ and that $K=\GL_n(\mathbb{Q}_p)$. The following lemma establishes the compatibility in this case, concluding the proof of Theorem \ref{theorem.lift}. 

\begin{lemma}
    Let $\mathbb{L}$ be a de Rham $\mathbb{Q}_p$-local system on $S$ and let $(\mc{V},\nabla, \Fil^\bullet)$ be the associated filtered vector bundle with integrable connection on $S$. The geometric Sen morphism 
    \begin{equation}\label{eq.deRhamGeometricSenCompEq1} \kappa_{\mathbb{L}}: T_S \xrightarrow{} \mc{E}nd_{\hat{\mc{O}}}(\mbb{L} \otimes_{\ul{\mbb{Q}_p}} \hat{\mc{O}})(-1) \end{equation}
factors through $\Fil^1_\HT \left(\mc{E}nd_{\hat{\mc{O}}}(\mbb{L} \hat{\otimes} \hat{\mc{O}})\right)$ and is a lift of the composition
\begin{equation}\label{eq.deRhamGeometricSenCompEq2} \kappa_{(\mc{V},\nabla, \Fil^\bullet)}: T_S   \rightarrow \gr^{-1}_{\Hdg}\left(\mc{E}nd_{\mc{O}}(\mc{V})\right)\otimes_{\mc{O}} \hat{\mc{O}}=\gr^1_{\HT}(\mc{E}nd(\mathbb{L} \otimes \hat{\mc{O}}))(-1).\end{equation}
\end{lemma}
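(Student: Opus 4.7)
The plan is to identify the geometric Sen morphism of $\mathbb{L}$ with the operator supplied by the Liu--Zhu Riemann--Hilbert correspondence \cite{LiuZhu.RigidityAndARiemannHilbertCorrespondenceForpAdicLocalSystems}, and then read off both conclusions from Griffiths transversality. First I would recall from \S\ref{ss.deRham} that the de Rham hypothesis yields a canonical $\mc{O}\mathbb{B}_\dR$-linear, connection- and filtration-preserving isomorphism
\[ \mathbb{L} \otimes_{\ul{\mathbb{Q}_p}} \mc{O}\mathbb{B}_\dR \cong \mc{V} \otimes_{\mc{O}} \mc{O}\mathbb{B}_\dR; \]
reducing modulo $\Fil^1 \mc{O}\mathbb{B}_\dR$ and applying the standard Tate-twisted identification of the associated graded of $\mc{O}\mathbb{B}_\dR$ produces both the Hodge--Tate filtration on $\mathbb{L} \otimes \hat{\mc{O}}$ and the comparison $\gr^k_\HT(\mathbb{L}\otimes \hat{\mc{O}}) \cong \gr^{-k}_\Hdg(\mc{V}) \otimes \hat{\mc{O}}(k)$ used in the statement.

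Next I would invoke the compatibility between Rodriguez Camargo's geometric Sen morphism for $\mathbb{L}$ and the connection $\nabla$ transported through the Liu--Zhu isomorphism. Working locally on a small affinoid with \'etale coordinates $T_1, \ldots, T_d$ and a local frame of $\mc{V}$, one uses a Faltings-extension-type local splitting $\mc{O}\mathbb{B}_\dR \twoheadrightarrow \hat{\mc{O}}$ to produce a local $\hat{\mc{O}}$-basis of $\mathbb{L} \otimes \hat{\mc{O}}$; relative to this basis, $\kappa_\mathbb{L}(\partial_{T_i})$ is given, up to the Tate twist $(-1)$, by the action of $\nabla_{\partial_{T_i}}$ transported across the Liu--Zhu identification. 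This is the same translation used for local and global Shimura varieties in \cite{RodriguezCamargo.LocallyAnalyticCompletedCohomology, DospinescuRodriguezCamargo.JacquetLanglands}, and the argument is not specific to the Shimura setting, so one can either specialize the existing compatibility or rerun the local calculation in this generality.

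Granted this formula, both conclusions follow immediately. By Griffiths transversality, $\nabla_{\partial_{T_i}}$ sends $\Fil^k_\Hdg \mc{V}$ into $\Fil^{k-1}_\Hdg \mc{V}$, so the induced endomorphism of $\mathbb{L} \otimes \hat{\mc{O}}$ shifts the Hodge--Tate filtration down by one, showing (after the Tate twist) that $\kappa_\mathbb{L}$ factors through $\Fil^1_\HT \mc{E}nd_{\hat{\mc{O}}}(\mathbb{L} \otimes \hat{\mc{O}})(-1)$. Passing to $\gr^1_\HT$, the operator reduces to the graded of $[\nabla,-]$ acting on $\gr^\bullet_\Hdg \mc{E}nd(\mc{V})$, which is exactly the Kodaira--Spencer map by the intrinsic description in \S\ref{ss.Kodaira-Spencer}. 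The main technical obstacle is the second step, namely matching Rodriguez Camargo's intrinsic construction of the Sen operator (going through the pro-\'etale frame torsor and a Galois-theoretic Sen action) with the concrete formula coming from the connection; this is essentially a local computation, but careful bookkeeping of conventions is required, in particular to locate the origin of the cyclotomic twist $(-1)$.
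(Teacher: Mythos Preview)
Your outline is sound in spirit and would eventually yield the result, but the step you yourself flag as ``the main technical obstacle'' is precisely where all the content lies, and you have not actually carried it out. Saying that one can ``rerun the local calculation in this generality'' or ``specialize the existing compatibility'' from the Shimura variety literature is not yet a proof: those references do the computation in settings with additional structure, and your proposal does not isolate which ingredient makes the matching of the Sen operator with the transported connection go through. In particular, a local splitting $\mc{O}\mathbb{B}_\dR \twoheadrightarrow \hat{\mc{O}}$ is not canonical, and you would need to check that the Sen operator computed via your chosen basis is independent of this choice and agrees with Rodriguez Camargo's intrinsic definition.

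The paper's argument sidesteps this bookkeeping entirely. Instead of choosing a local section, it passes to $\gr^0$ of the filtered de Rham complex $\mathbb{L}\otimes \mc{O}\mathbb{B}_\dR\otimes \Omega^\bullet = \mc{V}\otimes \Omega^\bullet \otimes \mc{O}\mathbb{B}_\dR$ to realize $\mathbb{L}\otimes\hat{\mc{O}}$ as the kernel of $\theta_{\mc{V}}\otimes 1 + 1\otimes\overline{\nabla}$ inside $\bigoplus_j \gr^{-j}_\Hdg\mc{V}\otimes\mc{O}\mathbb{C}(j)$. The key input is then \cite[Proposition~3.5.2]{RodriguezCamargo.GeometricSenTheoryOverRigidAnalyticSpaces}, which computes the geometric Sen operator on $\mc{O}\mathbb{C}$ to be $-\overline{\nabla}$; since Tate twists and tensoring with \'etale vector bundles do not change it, the Sen operator on the ambient sum is $-1\otimes\overline{\nabla}$. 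On the kernel one has $\theta_{\mc{V}}\otimes 1 = -1\otimes\overline{\nabla}$, so the Sen operator on $\mathbb{L}\otimes\hat{\mc{O}}$ coincides with $\theta_{\mc{V}}\otimes 1$. This immediately shows the filtration shift and identifies the graded piece with Kodaira--Spencer, without any local coordinate computation. Your route would arrive at the same place but with more work; the paper's trick of using the kernel description to convert $-\overline{\nabla}$ into $\theta_{\mc{V}}$ is the efficient move you are missing.
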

\begin{proof}
We write $\Omega^1_S(-1):=\Omega^1 \otimes_{\mathcal{O}} \hat{\mathcal{O}}(-1)$. 
Let $\theta_\mathbb{L}$ be the map $\mathbb{L} \otimes_{\ul{\mbb{Q}_p}} \hat{\mc{O}} \rightarrow \mathbb{L} \otimes_{\ul{\mbb{Q}_p}} \Omega^1_S(-1)$ as in \cite[Theorem 1.0.3]{RodriguezCamargo.GeometricSenTheoryOverRigidAnalyticSpaces} (so that $\kappa_{\mathbb{L}}$ is obtained by contracting $\theta_\mathbb{L}$ with sections of $T_S$).  Similarly, let $\theta_{\mathcal{V}}$ be the  the section of 
\[ \gr^{-1}_\Hdg\left(\mc{E}nd(\mc{V})\right) \otimes_{\mathcal{O}} \Omega^1_S=\bigoplus_i \mc{H}om(\gr^{i}_\Hdg\mc{V}, \gr^{i-1}_\Hdg\mc{V}) \otimes \Omega^1_S \]
such that the map of \eqref{eq.deRhamGeometricSenCompEq2} is obtained by contraction with $\theta_{\mc{V}}$.
In degree $i$, $\theta_{\mc{V}}$ is the $\mc{O}$-linear map induced by
\[ \Fil^i_\Hdg \mc{V} \xrightarrow{\nabla} \Fil^{i-1} \mc{V}\otimes \Omega^1_S \rightarrow \gr^{i-1}_\Hdg \mc{V} \otimes \Omega^1_S. \]

It thus suffices to see that, after applying the Hodge-Tate comparison, $\theta_{\mathbb{L}}$ is a lift of $\theta_{\mathcal{V}}$. We will accomplish this by spelling out how the de Rham comparison gives rise to the Hodge-Tate comparison and then using the computation of the geometric Sen operator on $\mathcal{O}\mathbb{C}=\gr^{0}\mc{O}\mbb{B}_\dR$ from \cite[\S 3.5]{RodriguezCamargo.GeometricSenTheoryOverRigidAnalyticSpaces}. 

Thus we first recall that by the de Rham comparison we have 
\[ \mbb{L} \otimes_{\ul{\mbb{Q}_p}} \mc{O}\mbb{B}_\dR = \mc{V} \otimes_{\mc{O}} \mc{O}\mbb{B}_\dR. \]
This identification matches the connection $1 \otimes \nabla$ on the left with $\nabla \otimes 1 + 1 \otimes \nabla$ on the right. It matches  the filtration $\mathbb{L} \otimes \Fil^\bullet \mc{O}\mbb{B}_\dR$ on the right with the convolved filtration $\Fil^\bullet_\Hdg \otimes \Fil^\bullet \mc{O}\mbb{B}_\dR$ on the right. This extends to an identification of de Rham complexes 
\[ \mbb{L} \otimes_{\ul{\mbb{Q}_p}} (\mc{O}\mbb{B}_\dR \otimes_{\mc{O}} \Omega^\bullet) = (\mc{V} \otimes_{\mathcal{O}} \Omega^\bullet_S) \otimes_{\mc{O}} \mc{O}\mbb{B}_\dR. \]
Note that these de Rham complexes are filtered exact, thus, taking the zeroth graded piece, we can compute $\tilde{\mathbb{L}} \otimes_{\ul{\mbb{Q}_p}}\hat{\mc{O}}$ via the following truncated diagram, whose columns are exact (note that $\gr^{i}\mc{O}\mbb{B}_\dR=\mc{O}\mbb{C}(i)$):
\[\begin{tikzcd}
	0 & 0 \\
	{\mathrm{Ker} (\theta_V\otimes 1 + 1\otimes \overline{\nabla})} & {\mathbb{L} \otimes_{\ul{\mbb{Q}_p}} \hat{\mc{O}} } \\
	{\bigoplus_j \gr^{-j}_{\Hdg}\mc{V} \otimes_{\mc{O}} \mc{O}\mbb{C}(j)} & {\mathbb{L} \otimes_{\ul{\mbb{Q}_p}} \mc{O}\mbb{C} } \\
	{\bigoplus_j \gr^{-j}_{\Hdg}\mc{V} \otimes_{\mc{O}} \mc{O}\mbb{C}(j) \otimes_{\mc{O}} \Omega^1} & {\mathbb{L} \otimes_{\ul{\mbb{Q}_p}} \mc{O}\mbb{C} \otimes_{\mc{O}} \Omega}
	\arrow["{=}"{description}, from=1-1, to=1-2]
	\arrow[from=1-1, to=2-1]
	\arrow[from=1-2, to=2-2]
	\arrow["{=}"{description}, no head, from=2-1, to=2-2]
	\arrow[from=2-1, to=3-1]
	\arrow[from=2-2, to=3-2]
	\arrow["{\theta_{\mc{V}}\otimes 1 + 1\otimes \overline{\nabla}}", from=3-1, to=4-1]
	\arrow["{=}"{description}, no head, from=3-2, to=3-1]
	\arrow["{1\otimes\overline{\nabla}}", from=3-2, to=4-2]
	\arrow["{=}"{description}, no head, from=4-2, to=4-1]
\end{tikzcd}\]
Now, note that $\theta_{\mathcal{V}}\otimes 1 + 1 \otimes \overline{\nabla}$ preserves the filtration 
\[ \Fil^i_{\HT}(\oplus_{j} \ldots)=\oplus_{j \geq i}\ldots. \]
The induced filtration on the kernel is the Hodge-Tate filtration on $\mathbb{L} \otimes_{\ul{\mbb{Q}_p}} \hat{\mc{O}}$, and because $\theta_\mathcal{V}$ increases the $j$ degree, the associated graded is computed as 
\[ \mr{Ker} \left( \oplus_j \gr^{-j}_{\Hdg}\mc{V} \otimes_{\mc{O}} \mc{O}\mbb{C}(j)) \xrightarrow{1 \otimes \overline{\nabla}} \oplus_j \gr^{-j}_{\Hdg}\mc{V} \otimes_{\mc{O}} \mc{O}\mbb{C}(j)\otimes_{\mc{O}} \Omega \right) = \oplus_j \gr^{-j}_{\Hdg}\mc{V}\otimes_{\mc{O}}\hat{\mc{O}}(j)\]
and this computation \emph{is} the Hodge-Tate comparison isomorphism for $\mathbb{L}$. 

We can also use this diagram to compute $\theta_\mathbb{L}$. Indeed, by \cite[Proposition 3.5.2]{RodriguezCamargo.GeometricSenTheoryOverRigidAnalyticSpaces}, the geometric Sen operator on $\mc{O}\mbb{C}$ is $- \overline{\nabla}$. Tate twists do not change the geometric Sen operator, nor does tensoring with an \'{e}tale vector bundle, so the geometric Sen operator on 
\[ \bigoplus \gr_\Hdg^{-j}\mathcal{V} \otimes_{\mc{O}} \mc{O}(\mbb{C})(j)\]
is just $-1\otimes \overline{\nabla}$. On the kernel of $\theta_{\mathcal{V}} \otimes 1  + 1 \otimes \overline{\nabla}$ we have $\theta_{\mathcal{V}} \otimes 1 = -1 \otimes \overline{\nabla}$, and thus on this kernel, which is $\mathbb{L} \otimes_{\ul{\mathbb{Q}_p}} \hat{\mathcal{O}}$, 
the geometric Sen operator agrees with $\theta_{\mathcal{V}} \otimes 1$. In particular, it increases the Hodge-Tate filtration by $1$, and the image in the associated graded of the endomorphism sheaf is precisely the map induced by $\theta_\mathcal{V}$ and the Hodge-Tate comparison. 
\end{proof}

\bibliographystyle{plain}
\bibliography{references, preprints}

@article{RodriguezCamargo.LocallyAnalyticCompletedCohomology,
      title={Locally analytic completed cohomology}, 
      author={J. E. Rodríguez Camargo},
      journal={ar{X}iv:2209.01057},
}

@article{BellovinCaiHowe.CharacterizingPerfectoidCoversOfAbelianVarieties,
AUTHOR={Bellovin, Rebecca and Cai, Hanlin and Howe, Sean},
TITLE={Characterizing perfectoid covers of abelian varieties},
YEAR={2025},
JOURNAL={ar{X}iv:2501.03974}
}

@article{DospinescuRodriguezCamargo.JacquetLanglands,
      title={A {J}acquet-{L}anglands functor for $p$-adic locally analytic representations}, 
      author={Gabriel Dospinescu and Juan Esteban Rodríguez Camargo},
      year={2024},
      journal={ar{X}iv:2411.17082},
}

@article{Howe.InscriptionTwistorsAndPAdicPeriods,
AUTHOR={Howe, Sean},
TITLE={Inscription, twistors, and $p$-adic periods},
YEAR={2025},
JOURNAL={ar{X}iv:2508.11589}
}

@Article{Pan.OnLocallyAnalyticVectorsOfTheCompletedCohomologyOfModularCurvesII,
  author        = {Lue Pan},
  title         = {On locally analytic vectors of the completed cohomology of modular curves II},
  year          = {2022},
  journal = {arXiv:2209.06366}
}

@article{He.PerfectoidnessViaSenTheory,
    AUTHOR = {He, Tongmu},
     TITLE = {Perfectoidness via {S}en theory and applications to {S}himura
              varieties},
   JOURNAL = {J. Amer. Math. Soc.},
  FJOURNAL = {Journal of the American Mathematical Society},
    VOLUME = {39},
      YEAR = {2026},
    NUMBER = {1},
     PAGES = {95--176},
      ISSN = {0894-0347,1088-6834},
   MRCLASS = {14G45 (11F80 14G35)},
  MRNUMBER = {4969357},
       DOI = {10.1090/jams/1060},
       URL = {https://doi.org/10.1090/jams/1060},
}

@Article{RodriguezCamargo.GeometricSenTheoryOverRigidAnalyticSpaces,
author={Rodriguez Camargo, Juan Esteban},
journal={arXiv:2205.02016, to appear in the Journal of the European Mathematical Society},
title={Geometric {S}en theory over rigid analytic spaces}
}

@Article{Pan.OnLocallyAnalyticVectorsOfTheCompletedCohomologyOfModularCurves,
  author     = {Pan, Lue},
  journal    = {Forum Math. Pi},
  title      = {On locally analytic vectors of the completed cohomology of modular curves},
  year       = {2022},
  issn       = {2050-5086},
  pages      = {Paper No. e7, 82},
  volume     = {10},
  doi        = {10.1017/fmp.2022.1},
  fjournal   = {Forum of Mathematics. Pi},
  mrclass    = {11F77 (11G18 14G35)},
  mrnumber   = {4390302},
  mrreviewer = {Yiwen\ Ding},
  url        = {https://doi.org/10.1017/fmp.2022.1},
}

@Article{HoweMorrowEtAl.TheConjugateUniformizationVia1Motives,
   AUTHOR = {Howe, Sean and Morrow, Jackson S. and Wear, Peter},
     TITLE = {The conjugate uniformization via 1-motives},
   JOURNAL = {Math. Z.},
  FJOURNAL = {Mathematische Zeitschrift},
    VOLUME = {307},
      YEAR = {2024},
    NUMBER = {3},
     PAGES = {Paper No. 47, 20},
      ISSN = {0025-5874,1432-1823},
   MRCLASS = {14F30 (11G10 11G25 14L15)},
  MRNUMBER = {4753682},
       DOI = {10.1007/s00209-024-03523-7},
       URL = {https://doi.org/10.1007/s00209-024-03523-7},
}

@Article{LiuZhu.RigidityAndARiemannHilbertCorrespondenceForpAdicLocalSystems,
  author     = {Liu, Ruochuan and Zhu, Xinwen},
  journal    = {Invent. Math.},
  title      = {Rigidity and a {R}iemann-{H}ilbert correspondence for {$p$}-adic local systems},
  year       = {2017},
  issn       = {0020-9910},
  number     = {1},
  pages      = {291--343},
  volume     = {207},
  doi        = {10.1007/s00222-016-0671-7},
  fjournal   = {Inventiones Mathematicae},
  mrclass    = {14G22 (14G35 14J60)},
  mrnumber   = {3592758},
  mrreviewer = {Marco A. Garuti},
  url        = {https://doi.org/10.1007/s00222-016-0671-7},
}

@Article{Scholze.pAdicHodgeTheoryForRigidAnalyticVarieties,
  author     = {Scholze, Peter},
  journal    = {Forum Math. Pi},
  title      = {{$p$}-adic {H}odge theory for rigid-analytic varieties},
  year       = {2013},
  issn       = {2050-5086},
  pages      = {e1, 77},
  volume     = {1},
  doi        = {10.1017/fmp.2013.1},
  fjournal   = {Forum of Mathematics. Pi},
  mrclass    = {14G22 (14C30 14F30 14G20 32J27 32P05)},
  mrnumber   = {3090230},
  mrreviewer = {Hui June Zhu},
  url        = {https://doi-org.stanford.idm.oclc.org/10.1017/fmp.2013.1},
}

@Article{Scholze.OnTorsionInTheCohomologyOfLocallySymmetricVarieties,
  author   = {Scholze, Peter},
  journal  = {Ann. of Math. (2)},
  title    = {On torsion in the cohomology of locally symmetric varieties},
  year     = {2015},
  issn     = {0003-486X},
  number   = {3},
  pages    = {945--1066},
  volume   = {182},
  doi      = {10.4007/annals.2015.182.3.3},
  fjournal = {Annals of Mathematics. Second Series},
  mrclass  = {11S37},
  mrnumber = {3418533},
  url      = {http://dx.doi.org/10.4007/annals.2015.182.3.3},
}

@Article{HoweKlevdal.AdmissiblePairsAndpAdicHodgeStructuresIITheBiAnalyticAxLindemannTheorem,
  author = {Howe, Sean and Klevdal, Christian},
  title  = {Admissible pairs and $p$-adic {H}odge structures {II}: The bi-analytic {A}x-{L}indemann theorem},
  journal={arXiv:2308.11064},
year={2023}
}

\end{document}